\newtheorem{theorem}{Theorem}
\newtheorem{corollary}[theorem]{Corollary}
\newtheorem{remark}[theorem]{Remark}
\def\r{\mathbb R}
\def\t{\textbf{t}}
\def\n{\textbf{n}}
\def\b{\textbf{b}}
\def\c{\textbf{c}}
\begin{document}
\title{Minimal   surfaces in Euclidean space with a log-linear density }
\author{Rafael L\'opez}
 \address{Departamento de Geometr\'{\i}a y Topolog\'{\i}a\\
Universidad de Granada\\
18071 Granada, Spain\\}
\thanks{Partially supported by MEC-FEDER
 grant no. MTM2011-22547 and Junta de Andaluc\'{\i}a grant no. P09-FQM-5088.}
 \email{ rcamino@ugr.es}

\begin{abstract}We study   surfaces in Euclidean space $\r^3$ that are minimal for a  log-linear density $\phi(x,y,z)=\alpha x+\beta y+\gamma y$, where $\alpha,\beta,\gamma$ are real numbers not all zero. We prove that if a surface is $\phi$-minimal foliated by circles in parallel planes, then these planes are orthogonal to the vector $(\alpha,\beta,\gamma)$ and  the surface must be rotational. We also classify all minimal surfaces of translation type.
\end{abstract}
\subjclass[2000]{ 53A10, 53C44}
\keywords{log-linear density, Riemann type, translation surface, soliton}

 \maketitle

\section{Introduction and statement of results}

In the last years the study and the interest for manifolds with density has increased   due to its applications in probability and statistics. The literature on manifolds with density has increased so we only refer the introductory survey of Morgan in \cite{mo0}; see also  \cite{mo1,mo2}. In this paper we consider surfaces in  Euclidean space $\r^3$ with a positive smooth density function $e^\phi$ which is used to weight the volume and the surface area. The mean curvature $H_\phi$  of an oriented surface $S$ in $\r^3$ with density $e^\phi$ is
\begin{equation}\label{hg}
H_\phi=H-\frac12\frac{d\phi}{dN},
\end{equation}
where $N$ and $H$ stand for the Gauss map of $S$ and  its mean curvature, respectively. The expression \eqref{hg} for  $H_\phi$ is obtained  by the first variation of weighted area \cite{gr}. We say that $S$ is a constant $\phi$-mean curvature surface if $H_\phi$ is a constant function on $S$, and if $H_\phi=0$ everywhere, we say that $S$ is a $\phi$-minimal surface. In this context, one can pose similar problems as in the classical theory of minimal surfaces.

Among the many choices of densities, we focus in the simplest function $\phi$, that is,  $\phi$ is a linear function in its variables, and  we say then that  $e^\phi$ is a \emph{log-linear density}. We suppose that $\phi(x,y,z)=\alpha x+\beta y+\gamma z$, where $(x,y,z)$ are the canonical coordinates of $\r^3$ and $\alpha,\beta,\gamma$ are real number not all zero. We call the vector $\vec{v}=(\alpha,\beta,\gamma)$ the \emph{$\phi$-density vector}. A computation of \eqref{hg} implies that $H_\phi=0$ if and only if
\begin{equation}\label{hg2}
H=\frac12\langle N,\vec{v}\rangle.
\end{equation}
The interest for this type of density functions  appears in the singularity theory of the mean curvature flow, where    the constant $\phi$-mean curvature equation $H_\phi=ct$  is  the  equation of the limit flow by a proper blow-up procedure near type II singular points (for example, see \cite{hs,wa}). In the literature, a solution of this equation is also called  a translating soliton, or simply translator \cite{wh} . Thus, a translator with velocity $\vec{v}$ is equivalent to be a $\phi$-minimal surface with $\phi$-density vector $\vec{v}$ (\cite{il}).

On the other hand, and previously, Eq. \eqref{hg2} had already studied in the theory of PDE of elliptic type.  In a nonparametric form, and for the particular case that $\phi(x,y,z)=\gamma z$,  the graph of a function $u=u(x,y)$ defined on a domain $\Omega$ of the $xy$-plane satisfies \eqref{hg2} if and only if \begin{equation}\label{eqdiv}
\mbox{div}\left(\frac{\nabla u}{\sqrt{1+|\nabla u|^2}}\right)=\frac{\gamma}{\sqrt{1+|\nabla u|^2}}.
\end{equation}
This equation is the model for a thin extensible film under the influence of gravity and surface tension and $\gamma\not=0$ is a physical constant. Equation \eqref{eqdiv} appeared in the classical article of Serrin  (\cite[p. 477--478]{se}) and studied later in the context of the maximum principle (\cite{be,pp}). Therefore, we may simply view the study of $\phi$-minimal surfaces as a problem of a particular prescribed mean curvature equation, namely, \eqref{hg} or \eqref{hg2}.

In the class of  $\phi$-minimal surfaces, it is interesting to know explicit examples because a number of such examples makes richer this family of surfaces. Comparing with the theory of minimal surfaces ($H=0$) in $\r^3$, it is natural to  impose some geometric property to the surface that makes easier the study of \eqref{hg}, such as that the surface is rotational, helicoidal, ruled or translation. This situation has been studied in  \cite{ha,mh}. Following the above two motivations, in this paper we consider $\phi$-minimal surfaces of Riemann type and of translation type and that we now explain.

Firstly,  we consider surfaces which are constructed by a $1$-parameter (smooth) family of circles, not all necessarily with the same radius. Following Enneper (\cite{en1}), we call a such surface a {\it cyclic surface}. So, surfaces of revolution are examples of cyclic surfaces. The classical theory of minimal surfaces asserts that besides the plane, the only examples of cyclic minimal surfaces in $\r^3$ is the catenoid, which it is the only rotational minimal surface, or the surface belongs to a family of minimal surfaces discovered by Riemann \cite{ri}. Each Riemann minimal example is constructed by circles and all them lie  in parallel planes. We refer the Nitsche' book for a general reference \cite{ni}. In general, we say that a \emph{surface of Riemann type} is a cyclic surface such that the circles of the foliation lie in parallel planes.  In the class of $\phi$-minimal surfaces with a log-linear density, we prove that the only cyclic surfaces must be surfaces of revolution. Exactly, we show:

\begin{theorem}\label{t1}
Let $S\subset\r^3$ be a$\phi$-minimal cyclic surface for a log-linear density. Then the planes of the foliation are parallel.
\end{theorem}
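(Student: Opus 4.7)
The plan is to adapt a parametrization to the circle foliation and translate the $\phi$-minimality condition into a trigonometric polynomial identity in $t$ whose Fourier coefficients must vanish term by term; the vanishing of the appropriate top-order coefficients will force the normal to the plane of each circle to be constant in $s$. Locally write
\[
X(s,t) = c(s) + r(s)\bigl(\cos t \, e_1(s) + \sin t\, e_2(s)\bigr), \qquad s \in I,\ t \in [0,2\pi),
\]
with $c(s)$ the centre of the $s$-th circle, $r(s) > 0$ its radius, and $\{e_1(s), e_2(s), \vec n(s)\}$ a smooth positively oriented orthonormal frame, $\vec n(s) = e_1(s)\times e_2(s)$ being the unit normal to the plane of the circle. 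Parallelism of the planes of the foliation is equivalent to $\vec n'(s) \equiv 0$.

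All quantities entering the $\phi$-minimality equation are polynomial in $(\cos t, \sin t)$: the vectors $X_s, X_t, X_{ss}, X_{st}, X_{tt}$ are vector-valued trigonometric polynomials of degree at most one, so $E, F, G$ and $\langle X_s\times X_t, \vec v\rangle$ have degree at most two, the triple products $[X_{ss}, X_s, X_t]$, $[X_{st}, X_s, X_t]$, $[X_{tt}, X_s, X_t]$ have degree at most three, and $W := EG - F^2$ has degree at most four. Using the identities $2H W^{3/2} = [X_{ss},X_s,X_t]\,G - 2[X_{st},X_s,X_t]\,F + [X_{tt},X_s,X_t]\,E$ and $\sqrt{W}\,N = X_s\times X_t$, equation \eqref{hg2} is equivalent to the polynomial identity
\[
[X_{ss},X_s,X_t]\,G - 2[X_{st},X_s,X_t]\,F + [X_{tt},X_s,X_t]\,E = \langle X_s\times X_t, \vec v\rangle\, W,
\]
which must hold for every $(s,t)$, free of square roots.

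Expanding both sides as finite Fourier series in $t$, the identity decomposes into a system of ODEs, one for each Fourier coefficient, satisfied simultaneously by the smooth functions $c(s), r(s), e_1(s), e_2(s)$. The strategy is to focus on the highest Fourier modes: these typically isolate $\langle e_1'(s), \vec n(s)\rangle$ and $\langle e_2'(s), \vec n(s)\rangle$ linearly through the derivatives of the frame, and their vanishing amounts to $\vec n'(s) = 0$. In the classical case (no density) this is the Enneper--Nitsche theorem, and the log-linear density contributes terms that, although reaching comparable Fourier modes, enter the relevant top-order equations in a way that can be separated, preserving the same conclusion. The principal technical obstacle is the algebraic bookkeeping: carrying out the expansion, identifying the precise Fourier modes that isolate the $\vec n$-components of the frame derivatives, and verifying that the density term does not produce cancellations that would accommodate non-parallel foliations.
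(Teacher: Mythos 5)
Your setup coincides with the paper's: an adapted orthonormal frame along the foliation, the square-root--free identity obtained from $2HW^{3/2}=[X_{ss},X_s,X_t]G-2[X_{st},X_s,X_t]F+[X_{tt},X_s,X_t]E$ and $\sqrt{W}N=X_s\times X_t$, and term-by-term vanishing of the Fourier coefficients in $t$. But as written there is a genuine gap: the entire content of the theorem is deferred to ``algebraic bookkeeping'' that you do not carry out, and the mechanism you predict is not the one that actually operates. The top Fourier modes do \emph{not} isolate $\langle e_1',\vec{n}\rangle$ and $\langle e_2',\vec{n}\rangle$ linearly. In the paper's computation one first rotates coordinates so that $\vec{v}=(0,0,1)$ (a normalization you are entitled to precisely because the planes are not assumed parallel to anything, and which you omit) and takes the frame to be the Frenet frame of an arclength curve $\Gamma$ orthogonal to the planes, so that non-parallelism becomes $\kappa\neq 0$. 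The resulting trigonometric polynomial has degree $4$, and its leading coefficient is
\begin{equation*}
A_4=\tfrac18\,\kappa r^2\left(n_3(\kappa^2r^2+v^2)+2b_3vw+n_3w^2\right),
\end{equation*}
with $B_4$ of the same shape: these involve the tangential components $v,w$ of the derivative of the curve of centers and the third coordinates $n_3,b_3$ of the frame vectors, not the derivative of the plane normal.

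The actual argument is a proof by contradiction from $\kappa\neq0$: the combination $b_3A_4-n_3B_4=0$ yields $(n_3^2+b_3^2)vw=0$, and one must then run a case analysis on the vanishing of $n_3$, $b_3$, $v$, $w$, descending through the coefficients of degrees $3$ and $2$ as well, until every branch forces $n_3=b_3=0$, i.e.\ $\Gamma$ is a vertical line, contradicting $\kappa\neq0$. So the conclusion does not fall out of the highest mode alone, and your own caveat --- that the density term might ``produce cancellations that would accommodate non-parallel foliations'' --- is exactly the point that must be verified and is not. As it stands the proposal is a correct plan in the same spirit as the paper, but not a proof.
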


\begin{theorem}\label{t2} Let $S\subset\r^3$ be a surface of Riemann type foliated by circles in parallel planes, all them orthogonal to a vector $\vec{v}\in\r^3$. If  $S$ is a $\phi$-minimal surface for a log-linear density, then the vector $\vec{v}$ is proportional to the $\phi$-vector density and $S$ is a surface of revolution whose axis is parallel to $\vec{v}$.
\end{theorem}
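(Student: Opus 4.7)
The plan is to use coordinates that normalize $\vec{v}$, write down an explicit parametrization of $S$, expand equation \eqref{hg2} as a trigonometric polynomial in the angular coordinate of the foliating circles, and extract information by matching Fourier coefficients. After a rigid motion I may assume $\vec{v}=(0,0,1)$, so the circles lie in the horizontal planes $z=t$; let the circle at height $t$ have center $(a(t),b(t),t)$ and radius $r(t)>0$. Then
$$
X(t,s)=\bigl(a(t)+r(t)\cos s,\; b(t)+r(t)\sin s,\; t\bigr)
$$
parametrizes $S$. Computing the unit normal and the mean curvature from $X$, substituting into \eqref{hg2} with density vector $(\alpha,\beta,\gamma)$, and clearing the common denominator produces a polynomial identity in $\cos s$ and $\sin s$ whose coefficients depend only on $t$.

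The crucial observation is a degree count: the left-hand side is quadratic in $(\cos s,\sin s)$, while the right-hand side is cubic, so the cubic Fourier part must vanish identically. This yields a linear system for $(P,Q):=(\gamma a'-\alpha,\;\gamma b'-\beta)$ whose determinant equals $(a'^{2}+b'^{2})^{2}$, and I split into two cases. If $a'^{2}+b'^{2}\neq 0$ on some open interval, then $P=Q=0$ there; this forces $\gamma\neq 0$ (otherwise $\alpha=\beta=0$ too, contradicting $(\alpha,\beta,\gamma)\neq 0$) and shows $a,b$ are affine, so $a''=b''=0$. Substituting back, the equation reduces to one whose left-hand side is linear and right-hand side is quadratic; the vanishing of the degree-$2$ Fourier coefficients forces $r'\equiv 0$, and the surviving constant term collapses to $a'^{2}+b'^{2}+1=0$, a contradiction. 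Therefore one must have $a'\equiv b'\equiv 0$ everywhere: the centers $(a_{0},b_{0},t)$ lie on a line parallel to $\vec{v}$ and $S$ is a surface of revolution with axis parallel to $\vec{v}$. Substituting $a'=b'=0$ back into the equation and separating the coefficients of $\cos s$, $\sin s$ and the constant yields $\alpha=0$ and $\beta=0$ at once, so $(\alpha,\beta,\gamma)=\gamma\vec{v}$ is proportional to $\vec{v}$; the remaining constant-term identity is an ODE for the radius $r(t)$.

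The main obstacle is the algebraic bookkeeping: expanding \eqref{hg2} carefully enough to isolate the cubic Fourier coefficients that initiate the argument, and then tracking the degree-$2$ and degree-$0$ identities in Case~1 to close the contradiction. One should use throughout that $r(t)>0$ on the foliation, and note that the apparent sub-case $\gamma=0$ inside Case~1 is eliminated at once because $P=Q=0$ together with $\gamma=0$ would force $(\alpha,\beta,\gamma)=0$.
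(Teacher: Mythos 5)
Your proposal is correct, and I verified its key algebraic claims against the paper's own computations: the third-harmonic part of the cleared equation is exactly $r\,(a'\cos t+b'\sin t)^2\bigl((\gamma a'-\alpha)\cos t+(\gamma b'-\beta)\sin t\bigr)$, its $\cos 3t$ and $\sin 3t$ coefficients form the $2\times 2$ system with matrix $\left(\begin{smallmatrix} a'^2-b'^2 & -2a'b'\\ 2a'b' & a'^2-b'^2\end{smallmatrix}\right)$ acting on $(P,Q)$, and the determinant is indeed $(a'^2+b'^2)^2$; expanding $P=\gamma a'-\alpha$, $Q=\gamma b'-\beta$ in these coefficients reproduces the paper's equations \eqref{g1}--\eqref{g2} verbatim. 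The overall machinery is the same as the paper's: identical parametrization with circles in planes parallel to the $xy$-plane and the density kept general (consistent with Remark \ref{rem}), the same trigonometric-polynomial identity, and the same strategy of descending from the highest harmonic. Where you genuinely differ is in the organization: the paper runs an exhaustive case analysis over which of $\alpha,\beta,\gamma$ vanish (six-plus separate cases, each with its own ad hoc combinations of $A_3$ and $B_3$), whereas your determinant observation collapses all of them into the single dichotomy $a'^2+b'^2=0$ versus $P=Q=0$. Your Case 1 then closes cleanly ($\gamma\neq 0$, $a,b$ affine, second harmonics force $r'\equiv 0$, constant term gives $1+a'^2+b'^2=0$), matching the paper's subcase $a'=\alpha/\gamma$, $b'=\beta/\gamma$ but without needing $\alpha\beta\gamma\neq 0$ as a hypothesis; and Case 2 yields $\alpha=\beta=0$ from the first harmonics together with the rotational ODE $r(r''+\gamma r'(1+r'^2))=1+r'^2$. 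The payoff of your route is a shorter, sign-pattern-independent argument; the cost is only that the remaining bookkeeping (which you correctly flag) must still be carried out to confirm the second- and zeroth-order coefficients, all of which check out.
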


 If we allow that $\phi$   can constantly vanish, then the above two results  summarize as follows:

\begin{corollary} Let  $e^\phi$ be a  density in $\r^3$, where $\phi(x,y,z)=\alpha x+\beta y+\gamma z$. Then the existence of  non-rotational cyclic surfaces occurs if and only if $\phi=0$ and in such a case,  $S$ is a Riemann minimal classical example.
\end{corollary}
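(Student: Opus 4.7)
The plan is to reduce the Corollary immediately to Theorems \ref{t1} and \ref{t2} together with the classical Enneper--Riemann classification of cyclic minimal surfaces of $\r^3$ that is already cited in the introduction. Since the equivalence to be established splits naturally according to whether the $\phi$-density vector $\vec{v}=(\alpha,\beta,\gamma)$ vanishes, I would organize the argument as a dichotomy on $\phi$.

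For the non-trivial implication, assume that $S$ is a non-rotational cyclic surface which is $\phi$-minimal for some log-linear density. If $\vec{v}\neq 0$, Theorem \ref{t1} applies and forces the planes of the foliation to be parallel, so $S$ becomes a surface of Riemann type. Theorem \ref{t2} then asserts that $S$ must be rotational with axis parallel to $\vec v$, contradicting the hypothesis. Hence $\vec{v}=0$, i.e.\ $\phi\equiv 0$.

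Once $\phi\equiv 0$, the weighted mean curvature $H_\phi$ from \eqref{hg} reduces to the Euclidean mean curvature $H$, so $S$ is simply a minimal cyclic surface in $\r^3$. At this point I invoke the classical theorem of Enneper and Riemann (as recalled in Nitsche \cite{ni}), which states that a minimal cyclic surface is either an open piece of a plane, of a catenoid, or of one of the Riemann examples. The first two options are rotational (planes degenerately so, catenoids in the usual sense), so the non-rotational hypothesis leaves only the Riemann family. The converse implication is immediate: any Riemann example is a non-rotational cyclic minimal surface, and it is trivially $\phi$-minimal for $\phi\equiv 0$.

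The only non-routine step is the invocation of the Enneper--Riemann classification; everything else is a direct combination of the two theorems proved earlier in the paper. Since that classification is a classical fact already referenced in the introduction, no genuine obstacle remains, and the entire argument fits in a few lines once the dichotomy on $\vec v$ is set up.
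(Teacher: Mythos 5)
Your proposal is correct and follows exactly the route the paper intends: the corollary is stated there as an immediate summary of Theorems \ref{t1} and \ref{t2}, with the case $\phi\equiv 0$ handled by the classical Enneper--Riemann classification of cyclic minimal surfaces cited in the introduction. The dichotomy on $\vec{v}$ and the final appeal to the classification are precisely the intended argument, so nothing is missing.
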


The second setting of examples of $\phi$-minimal surfaces appears when we study Eq. \eqref{eqdiv} by separation of variables $z=f(x)+g(y)$. A surface $S$ which is a graph of such a function is called a \emph{translation surface}. Let us observe that $S$ is the sum of two planar curves, namely, $x\mapsto (x,0,f(x))$ and $y\mapsto (0,y,g(y))$. Then $S$  has the property that the translations of a parametric curve $x=ct$ by the parametric curves $y=ct$ remain in $S$ (similarly for the parametric curves $x=ct$). For  minimal surfaces in $\r^3$, the only example, besides the plane, is the  Scherk's minimal surface $z=\log(\cos (\lambda y))-\log(\cos(\lambda x))$, $\lambda>0$ (\cite{ni}). Here we study translation surfaces that are $\phi$-minimal  with a log-linear density. In \cite{mh} it has been considered the case $\alpha=\beta=0$, proving that the only examples occur when $f$ or $g$ is linear, that is, $S$ is a cylindrical surface whose rulings are parallel to the $xz$-plane or to the $yz$-plane. We extend this result assuming $\phi$ in all its generality:

\begin{theorem}\label{t3} Let $S\subset\r^3$ be a translation surface $z=f(x)+g(y)$. If $S$ is a $\phi$-minimal surface for a log-linear density $\phi(x,y,z)=\alpha x+\beta y+\gamma z$, then $f$ or $g$ is a linera function and the surface is cylindrical whose rulings are parallel to the $xz$-plane or to the $yz$-plane.
\end{theorem}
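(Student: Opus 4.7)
The plan is to reduce the $\phi$-minimal equation for the graph $z=f(x)+g(y)$ to a polynomial identity in two independent variables and then extract the conclusion algebraically. Writing the upward unit normal as $N=(-f',-g',1)/W$ with $W=\sqrt{1+f'(x)^{2}+g'(y)^{2}}$, together with the standard graph formula for the mean curvature, the condition \eqref{hg2} becomes, after clearing denominators,
\[
(1+q^{2})\,f''(x)+(1+p^{2})\,g''(y)=(\gamma-\alpha p-\beta q)(1+p^{2}+q^{2}),
\]
where $p:=f'(x)$ and $q:=g'(y)$.

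Assume for contradiction that neither $f$ nor $g$ is affine. Then on some open rectangle $I_{x}\times I_{y}$ we have $f''\neq 0$ and $g''\neq 0$, so $p$ and $q$ are local diffeomorphisms onto intervals; writing $f''=A(p)$ and $g''=B(q)$, the identity becomes a functional equation between the now independent variables $p,q$:
\[
(1+q^{2})A(p)+(1+p^{2})B(q)=(\gamma-\alpha p-\beta q)(1+p^{2}+q^{2}).
\]

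Next I would apply the mixed partial derivative $\partial^{2}/(\partial p\,\partial q)$. This kills $A(p)$ and $B(q)$ themselves, and after a short simplification collapses the identity to
\[
q\bigl(A'(p)+\alpha\bigr)+p\bigl(B'(q)+\beta\bigr)=0.
\]
If $p\equiv 0$ or $q\equiv 0$ on the interval, then $f$ or $g$ is already affine, against the standing assumption; otherwise separation of variables produces a common constant $\mu$ with $A'(p)=\mu p-\alpha$ and $B'(q)=-\mu q-\beta$, and integration gives
\[
A(p)=\tfrac{\mu}{2}p^{2}-\alpha p+C_{1},\qquad B(q)=-\tfrac{\mu}{2}q^{2}-\beta q+C_{2}.
\]

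Substituting these quadratic expressions back into the displayed identity, the two $p^{2}q^{2}$ contributions cancel by the choice of $A$ and $B$, leaving a polynomial identity in the independent variables $p,q$. Matching the coefficients of $p^{3}$ and $q^{3}$ forces $\alpha=\beta=0$, after which the constant, $p^{2}$ and $q^{2}$ coefficients form three linear equations in $(C_{1},C_{2},\mu)$ whose compatibility is equivalent to $\gamma=0$. This contradicts $(\alpha,\beta,\gamma)\neq 0$, so $f$ or $g$ is affine, and $S$ is then a cylinder with rulings parallel either to the $xz$-plane or to the $yz$-plane. The main obstacle is not conceptual but organizational: one has to check carefully that the surviving polynomial system after substitution really is overdetermined, rather than admitting a nontrivial $(\alpha,\beta,\gamma)$; the degenerate case $p\equiv 0$ or $q\equiv 0$ needs a separate line, but is immediate.
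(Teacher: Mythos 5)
Your proposal is correct and follows essentially the same route as the paper's proof: your mixed partial $\partial^2/\partial p\,\partial q$ identity $q(A'(p)+\alpha)+p(B'(q)+\beta)=0$ is exactly the paper's equation \eqref{eqt2} rewritten in the variables $p=f'$, $q=g'$, and both arguments then separate variables, integrate to make $f''$ and $g''$ quadratic polynomials in $p$ and $q$, and substitute back into the original equation to force $\alpha=\beta=\gamma=0$. The only difference is organizational: by treating $p,q$ as independent variables you handle all sign patterns of $\alpha,\beta$ in a single two-variable coefficient comparison, whereas the paper runs three separate cases ($\alpha=\beta=0$; $\alpha=0,\beta\neq0$; $\alpha\beta\neq0$) and compares coefficients in $f'$ alone.
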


In fact, we obtain new examples of $\phi$-minimal surfaces of translation type for values $\alpha,\beta\not=0$.

\section{Preliminaries}
Consider $S$ a surface in $\r^3$ with a  log-linear density $e^\phi$, where $\phi(x,y,z)=\alpha x+\beta y+\gamma z$, $\alpha,\beta,\gamma$ not all zero. Because all our results are local, we suppose that $S$ is oriented and denoted by $N$ its Gauss map.  The expression of $d\phi/dN$ is then
$$\frac{d\phi}{dN}=\alpha N_1+\beta N_2+\gamma N_3,$$
 where $N=(N_1,N_2,N_3)$ are the  coordinates of $N$ with respect to the canonical basis of $\r^3$.
From \eqref{hg2}, the $\phi$-minimality condition $H_\phi=0$ expresses as
\begin{equation}\label{HN}
H=\frac12(\alpha N_1+\beta N_2+\gamma N_3).
\end{equation}

 We now compute $H$ when $S$ is a surface of Riemann type. Without loss of generality, we suppose  that the surface   is foliated by circles contained in planes parallel to the $xy$-plane. Then the surface parametrizes locally as
 $$X(s,t)=(a(s),b(s),s)+r(s)(\cos(t),\sin(t),0),\ s\in I, t\in \r,$$
 where $a,b,r$, $r>0$ are smooth functions defined in some interval $I$. In fact, the results that we will obtain hold for surfaces that are foliated by pieces of circles because it is enough that the range of $t$ is an interval of $\r$. The computation of the Euclidean mean curvature $H$ yields
 $$H=\frac{1+(a'+r'\cos(t))^2+(b'+r'\sin(t))^2-r(r''+a''\cos(t)+b''\sin(t))}{2r(1+(r'+a'\cos(t)+b'\sin(t))^2)^{3/2}}.$$
 Here $'$ denotes the derivative with respect to $s$ and we also drop the dependence of the functions $a,b$ and $r$ on the variable $s$.
 The mean curvature function $H$ is computed with respect to Gauss map
 $$N=\frac{X_s\times X_t}{|X_s\times X_t|}=\frac{(-\cos(t),-\sin(t),r'+a'\cos(t)+b'\sin(t))}{\sqrt{1+(r'+a'\cos(t)+b'\sin(t))^2}}.$$

 \begin{remark}\label{rem}We point out that after a change of coordinates, the density function $\phi$ can  prescribe to be linear in one variable, as for example, $\phi(x,y,z)=z$. But in such a case, the planes of the foliation containing the circles of $S$ change by the above change of coordinates. Thus if we have assumed that the planes are parallel to a given plane, in our case, the plane of equation $z=0$, then the function $\phi$ has to be assumed in all its generality.
  \end{remark}

  In order to simplify the notation, let
  $$U=1+(a'+r'\cos(t))^2+(b'+r'\sin(t))^2-r(r''+a''\cos(t)+b''\sin(t)).$$
  $$W=1+(r'+a'\cos(t)+b'\sin(t))^2.$$
  Moreover,
  $$N_1=-\frac{\cos(t)}{\sqrt{W}},\ N_2=-\frac{\sin(t)}{\sqrt{W}},\ N_3=\frac{r'+a'\cos(t)+b'\sin(t)}{\sqrt{W}}.$$

  \section{Proof of Theorem \ref{t2}}
Let $S$ be a  $\phi$-minimal surface of Riemann type with a log-linear density. Then \eqref{HN}  writes as
\begin{equation}\label{ww}
U-rW\left(-\alpha \cos(t)-\beta\sin(t)+\gamma(r'+a'\cos(t)+b'\sin(t))\right)=0.
\end{equation}
In order to prove Th. \ref{t2}, we have to show two things. First, that $\alpha=\beta=0$ and second, that the curve of centers $s\longmapsto (a(s),b(s),s)$ is parallel to the $z$-axis, that is, that $a$ and $b$ are constant functions, or equivalently, $a'=b'=0$.

  After a straightforward computation, Eq. \eqref{ww} can viewed as a  polynomial equation
\begin{equation}\label{fou}
\sum_{n=0}^3 \left(A_n(s)\cos(nt)+B_n(s)\sin(nt)\right)=0.
\end{equation}
 Because the trigonometric functions $\{\cos(nt),\sin(nt):n\in{\mathbb Z}\}$ are independent linearly, then the coefficient functions vanish identically, that is, $A_n=B_n=0$ for $0\leq n\leq 3$. The work to do is firstly the computation of the coefficients of the greatest degree in \eqref{fou}. As we go solving the corresponding equations $A_n=0$ and $B_n=0$, and thanks to this new information, we come back to \eqref{fou} to compute the next coefficients of lower degree until we obtain the desired result. We point out that the use of a symbolic program (as Mathematica or Maple)  reduces meaningfully the computations.

 We distinguish cases according the values of $\alpha,\beta$ and $\gamma$.

 \subsection{Cases where  some $\alpha,\beta,\gamma$ is $0$}

 First we suppose that two of the three constants are $0$.

 \begin{enumerate}
 \item Case $\beta=\gamma=0$. The computations of $A_3$ and $B_3$ gives
 $$A_3=\frac14\alpha  r(a'^2-b'^2),\ \ B_3=\frac12\alpha r a'b'.$$
 We  deduce that $a'=b'=0$ in the interval $I$, that is, $a$ and $b$ are constant functions.   Now \eqref{fou} is a polynomial equation  of degree $n=1$, with $A_1=\alpha r (1+r'^2)$, and $A_1=0$ yields a contradiction.
 \item Case $\alpha=\gamma=0$. This is similar to the previous case.
 \item Case $\alpha=\beta=0$. Then
 $$A_3=-\frac14\gamma ra'(a'^2-3b'^2),\ \ B_3=-\frac14\gamma r b'(3a'^2-b'^2).$$
 Again we deduce $a'=b'=0$ and the functions $a$ and $b$ are constant.  Now the degree of  polynomial equation \eqref{fou} is simply $n=0$, obtaining
 $$r(r''+\gamma r'(1+r'^2))-(1+r'^2)=0.$$
 \end{enumerate}

 Now we study the case that only  one of the constants  $\alpha,\beta$ or $\gamma$ is $0$.
 \begin{enumerate}
 \item Case $\alpha=0$ and $\beta,\gamma\not=0$. We have
 $$A_3=-\frac14ra'\left(\gamma a'^2+b'(2\beta-3\gamma b')\right)$$
 $$B_3=\frac14 r\left( a'^2(\beta-3\gamma b')+b'^2(\gamma b'-\beta)\right).$$
 From $A_3=0$, we distinguish  two cases.
\begin{enumerate}
 \item Assume $a'=0$. Then $B_3$ reduces $B_3=rb'^2(\gamma b'-\beta)/4$. We have two possibilities. If $b'=0$, then $b$ is a constant function, but now  the coefficient $B_1$ is $B_1=\beta r(1+r'^2)$, a contradiction. Thus $b'\not=0$. Now $B_3=0$ writes as $\gamma b'-\beta=0$. From $A_2=0$ we deduce $ r'(3\gamma b'-2\beta)=0$.
   If $r'\not=0$, we obtain a contradiction.  Thus $r'=0$  and $b(s)=\beta s/\gamma+b_0$, $b_0\in\r$. But now \eqref{fou} reduces into $1+\beta^2/\gamma^2=0$, a contradiction.
\item Assume $a'\not=0$. Then $\beta A_3-\gamma B_3=0$ writes as
$$b'(2\beta^2+3\gamma^2a'^2-2\beta\gamma b'-\gamma^2 b'^2)=0.$$
Suppose $b'=0$. Then $A_3=-\gamma ra'/4$, a contradiction.  Thus $b'\not=0$. Then $A_3=0$ and the above equation write now, respectively, as
$$\gamma a'^2+2\beta b'-3\gamma b'^2=0$$
$$\ 3\gamma^2 a'^2-2\beta\gamma b'-\gamma^2b'^2+2\beta^2=0.$$
Multiplying the first equation by $\gamma$ and subtracting the second one,  we get $ \gamma^2 a'^2+(\beta-\gamma b')^2=0$, which implies $\gamma=a'=0$, a contradiction.
\end{enumerate}

 \item Case $\beta=0$  and $\alpha,\gamma\not=0$. It is similar to the above case.
 \item Case $\gamma=0$ and $\alpha,\beta\not=0$. The computation of \eqref{fou} yields
 $$A_3=\frac14 r(\alpha a'^2-2\beta a'b'-\alpha b'^2),\ B_3=\frac14r(\beta a'^2+2\alpha a'b'-\beta b'^2)$$
 Combining $A_3=B_3=0$, we have
 $$(\alpha^2+\beta^2)(a'^2-b'^2)=0,\ (\alpha^2+\beta^2)a'b'=0.$$
 Thus $a'=b'=0$ and $a$ and $b$ are constant functions.
Now \eqref{fou} simplifies, obtaining $A_1=r\alpha$ and $B_1=r\beta$, a contradiction with the fact that $\alpha,\beta\not=0$.
 \end{enumerate}

\subsection{Case  $\alpha \beta\gamma\not=0$}
The computation of \eqref{fou} together $A_3=B_3=0$ implies
\begin{equation}\label{g1}
\alpha a'^2-\gamma a'^3-2\beta a'b'-\alpha b'^2+3\gamma a'b'^2=0
\end{equation}
\begin{equation}\label{g2}
\beta a'^2+2\alpha a'b'-3\gamma a'^2 b'-\beta b'^2+\gamma b'^3=0.
\end{equation}
Multiplying the first equation by $b'$ and subtracting the second one multiplied by $a'$, we get
$(a'^2+b'^2)(\beta a'+\alpha b'-2\gamma a'b')=0$. If $a'^2+b'^2=0$, then $a$ and $b$ are constant functions and this gives immediately $A_1=\alpha r(1+r'^2)$, a contradiction. Thus suppose that $a'$ and $b'$ do not vanish simultaneously and so,
\begin{equation}\label{ab}
\beta a'+\alpha b'-2\gamma a'b'=0.
\end{equation}
Using the above expression, the equations \eqref{g1} and \eqref{g2} simplify, respectively, as
$$a'(\alpha a'-\gamma a'^2-\beta b'+\gamma b'^2)=0$$
$$b'(\alpha a'-\gamma a'^2-\beta b'+\gamma b'^2)=0.$$
\begin{enumerate}
\item Case $a'=0$. It follows that $A_3=-\alpha rb'^2/2$, obtaining  $b'=0$, a contradiction.
\item Case $b'=0$. It is analogous to the previous case $a'=0$.
\item The remaining case is that
 $$\alpha a'-\gamma a'^2-\beta b'+\gamma b'^2=0.$$
This equation together \eqref{ab} allows to solve in $a',b'$, obtaining
 $$a'=\frac{\alpha}{\gamma},\ \ b'=\frac{\beta}{\gamma}.$$
 Let $a(s)=\alpha a/\gamma+a_0$ and $b(s)=\beta s/\gamma+b_0$, $a_0,b_0\in\r$. Then
 $$A_2=\frac{(\beta^2-\alpha^2)rr'}{2\gamma},\ \ B_2=-\frac{\alpha\beta rr'}{\gamma}.$$
 Hence $r'=0$.  But now Eq. \eqref{fou} reduces into $1+(\alpha^2+\beta^2)/\gamma^2=0$, obtaining a contradiction.
 \end{enumerate}

Arrived at this step of the arguments, we summarize the results obtained up now. Consider $S$ a surface of Riemann type which is foliated by circles in parallel planes to the $xy$-plane. If $S$ is a $\phi$-minimal surface for a linear function $\phi$, then $\phi$ must be $\phi(x,y,z)=\gamma z$, that is, the density of Euclidean space is $e^{\gamma z}$. Under this condition, the functions $a$ and $b$ are constant and the curve of  the centers of the circles is a straight-line parallel to the $z$-axis. The surface parametrizes locally as
$$X(s,t)=(a,b,s)+r(s)(\cos(t),\sin(t),0),\ \ a,b\in\r, r(s)>0$$
where the function $r=r(s)$ satisfies
$$ r(r''+\gamma r'(1+r'^2))=1+r'^2.$$

 This is the equation of the rotational solutions of \eqref{eqdiv} and, in particular, the existence is assured at least locally.  We point out that  it has been  shown the existence of  convex, rotationally symmetric  translating solitons which are graphs on the $xy$-plane and asymptotic to a paraboloid \cite{aw}. Moreover the only entire convex solution to \eqref{eqdiv}  must be rotationally symmetric in an appropriate coordinate system \cite{wa}.

In a similar way, we   get a  result as in Th. \ref{t2} for constant $\phi$-mean curvature surfaces, that is, $H_\phi$ is constant on the surface. The arguments are equal except that the computation are longer. For surfaces of Riemann type and in the case that $\alpha=\beta=0$, we prove:
\begin{theorem}
 Let $S\subset\r^3$ be a surface foliated by circles contained in parallel planes to the $xy$-plane. Suppose  $H_\phi$ is a nonzero constant for a log-linear density $e^{ z}$. Then $S$ is a surface of revolution whose axis is parallel to the $z$-axis.
\end{theorem}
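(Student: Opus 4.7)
The strategy parallels the argument for Theorem~\ref{t2} in the case $\alpha = \beta = 0$, $\gamma = 1$, modified to carry the extra constant $c$. Since $\phi(x,y,z) = z$, the density vector is $\vec{v} = (0,0,1)$ and $H_\phi = c$ becomes $2H = N_3 + 2c$. Substituting the Section~2 formulas into this identity and clearing denominators with $rW^{3/2}$ yields
$$U - rWQ = 2c\,r\,W^{3/2},$$
where, as before, $Q = r' + a'\cos t + b'\sin t$ and $W = 1 + Q^2$. The obstruction to proceeding exactly as in Theorem~\ref{t2} is the radical factor $W^{3/2}$. I would remove it by squaring and using $Q^2 = W-1$, obtaining the polynomial identity
$$(1-4c^2)\,r^2 W^3 - 2UrWQ + U^2 - r^2 W^2 = 0.$$

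Since $U$, $W$, $Q$ have trigonometric degrees $1$, $2$, $1$ in $t$, the left-hand side is a trigonometric polynomial in $t$ of degree at most $6$. Expanding it as $\sum_{n=0}^6 (A_n(s)\cos(nt) + B_n(s)\sin(nt)) = 0$ and using linear independence of $\{\cos(nt), \sin(nt)\}$, every Fourier coefficient must vanish. Writing $P := a'\cos t + b'\sin t = R\cos(t-\theta)$ with $R = \sqrt{a'^2+b'^2}$, the degree-$6$ part of the left-hand side comes solely from $(1-4c^2)r^2 W^3$ and equals $(1-4c^2)r^2 P^6$. Since $\cos^6(t-\theta)$ has nonzero $\cos(6t)$ and $\sin(6t)$ Fourier coefficients $\tfrac{1}{32}\cos 6\theta$ and $\tfrac{1}{32}\sin 6\theta$, the equations $A_6 = B_6 = 0$ yield
$$(1-4c^2)\,r^2\,(a'^2+b'^2)^3 = 0.$$
Under the generic hypothesis $c \neq \pm\tfrac{1}{2}$, this immediately forces $a' = b' = 0$. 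In the borderline case $c = \pm\tfrac{1}{2}$ the degree-$6$ term cancels; one then inspects the modes $n = 4, 3$, whose coefficients (coming from $-r^2 W^2$ and $-2UrWQ$) still involve nontrivial multiples of $R^3$ and, combined with each other, again force $a' = b' = 0$.

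Once $a' = b' = 0$, the curve of centers is a straight line parallel to the $z$-axis, and the remaining modes $n = 0, 1, 2$ collapse to a single ODE for the profile $r(s)$, namely
$$r\bigl(r'' + r'(1+r'^2)\bigr) - (1+r'^2) + 2c\,r(1+r'^2)^{3/2} = 0,$$
so $S$ is a surface of revolution about a vertical axis, as claimed. The chief obstacle is purely computational: squaring doubles the trigonometric degree from $3$ to $6$, producing thirteen Fourier equations rather than seven, and the borderline $c = \pm\tfrac{1}{2}$ case is delicate because the clean top-mode argument degenerates there and one must analyse several consecutive modes in tandem. As in the proof of Theorem~\ref{t2}, a computer algebra system is essential to manage the expansions, but no new conceptual ingredient is required beyond the Fourier-mode strategy already developed.
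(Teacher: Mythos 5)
Your proposal follows essentially the same route as the paper: square the identity $U - rWQ = c\,rW^{3/2}$, expand the result as a trigonometric polynomial of degree $6$ in $t$, and read off $a'=b'=0$ from the vanishing of the top mode whenever the constant avoids the borderline value (your $(1-4c^2)r^2(a'^2+b'^2)^3=0$ is exactly the combination of the paper's $A_6=B_6=0$). The only place your sketch is thinner than the paper's argument is the borderline case (the paper's $c^2=1$ under the normalization $H_\phi=c/2$, your $c=\pm\tfrac12$): there the paper uses the $n=4$ equations to solve for $a''$ and $b''$ and then needs both the $n=3$ and the $n=2$ coefficients, concluding either $a'=b'=0$ directly or the contradiction $r=0$, rather than only the modes $n=4,3$ as you suggest.
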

\begin{proof}

Suppose $H_\phi=c/2$, where $c$ is a nonzero number. Then we have
 $$H=\frac12( N_3+c).$$
By using the notation in the above section, this equation writes now as
$$ U-rW  (a'\cos(t)+b'\sin(t)+r')= c r W^{3/2}.$$
Squaring in this identity and placing all the terms in the left hand side, we obtain a polynomial equation
\begin{equation}\label{fouh}
\sum_{n=0}^6 \left(A_n(s)\cos(nt)+B_n(s)\sin(nt)\right)=0.
\end{equation}
Again, all coefficients $A_n=B_n=0$ must vanish identically for $0\leq n\leq 6$. From $A_6=0$ and $B_6=0$, we have
$$(c^2-1)(a'^6-15a'^4b'^2+15a'^2b'^4-b'^6)=0$$
$$(c^2-1)a'b'(3a'^4-10a'^2b'^2+3b'^4)=0.$$
\begin{enumerate}
 \item Case $c^2\not=1$. Then we deduce immediately $a'=0$ and $b'=0$.
\item Case $c^2=1$. Then   $A_4=0$ and $B_4=0$ write, respectively, as
$$(r+4r')(a'^4-6a'^2b'^2+b'^4)+ra''(6a'b'^2-2a'^3)+rb''(6a'^2b'-2b'^3)=0$$
$$2a'(r+4r')(-a'^2b'+b'^3)+ra''b'(3a'^2-b'^2)+ra'b''(a'^2-3b'^2)=0.$$
Hence we deduce
$$a''=\frac{ra'+4a'r'}{2r},\ b''=\frac{rb'+4b'r'}{2r}.$$
Using these values of $a''$ and $b''$, the computations of $A_3=0$ and $A_2=0$ imply that $a'=b'=0$, or
$$2+2a'^2+2b'^2+rr'+2r'^2-2rr''=0$$
$$8r'(1+a'^2+b'^2+r'^2)+r(1+4r'^2-8r'r'')=0,$$
respectively. In the latter case, multiplying the first equation by $-4r'$ and adding the second equation, we obtain $r=0$, a contradiction.
\end{enumerate}
In any of the two cases, we conclude $a'=b'=0$, which proves that $S$ is a surface of revolution. As the curve of centers is $s\mapsto (a,b,s)$, $a,b\in\r$, then the axis of rotation is parallel to the $z$-axis,  showing the result.
 \end{proof}

\section{Proof of Theorem \ref{t1}}

We follow the same ideas as in the proof of the result for minimal surfaces in $\r^3$, see \cite[p. 85-86]{ni}. The proof is by contradiction and we assume that the planes of the foliation are not parallel. Because the result is local, we only work  in an interval of the foliation that defines the surface.  Let $\{E_1=(1,0,0), E_2=(0,0,1), E_3=(0,0,1)\}$ the canonical basis of $\r^3$. Because the planes of the foliation are not parallel, and after a change of coordinates,  we can suppose that the $\phi$-vector density is $\vec{v}=E_3$. Let $s$ be the parameter of the uniparametric family of circles that defines the surface, let $r(s)>0$ be the radius in each $s$-leaf. Consider $\Gamma=\Gamma(s)$ a curve parametrized by the arc-length which is orthogonal to each $s$-plane. This means that the vector $\Gamma'(s)$ is orthogonal to the $s$-plane. Because we suppose that the planes are not parallel, then $\Gamma$ is not a straight-line. Let $\{\t(s),\n(s),\b(s)\}$ be the Frenet frame of $\Gamma$, where $\t, \n$ and $\b$ are the tangent vector, normal vector and binormal vector of $\Gamma$, respectively. If $\c=\c(s)$ is the curve of centers of the circles of the foliation, then $S$ parametrizes locally  as
$$X(s,t)=\c(s)+r(s)\left(\cos(t)\n(s)+\sin(t)\b(s)\right).$$
Let $\c'(s)=u(s)\t(s)+v(s)\n(s)+w(s)\b(s)$, where $u,v,w$ are smooth functions. Consider the Frenet equations of $\Gamma$:
$$\begin{array}{ccc}
\t'&=&\kappa\n\\
\n'&=&-\kappa\t+\sigma\b\\
\b'&=&-\sigma\n
\end{array}$$
Here $'$ denotes the derivative with respect to the $s$-parameter and $\kappa$ and $\sigma$ are the curvature and torsion of $\Gamma$, respectively. Since $\Gamma$ is not a (piece of) straight-line, then $\kappa\not=0$. Now the $\phi$-minimal condition writes simply as $2H=\langle N,E_3\rangle=N_3$, where $N$ is the Gauss map of $S$. Now we compute each one the terms of  this equation. The Gauss map is
$$N=\frac{(\cos(t) P+\sin(t)Q)\t-\cos(t)M \n-\sin(t)M\b}{\sqrt{M^2+(\cos(t)P+\sin(t)Q)^2}},$$
where
$$M=u-r\kappa\cos(t),\ P=v+r'\cos(t)-r\tau\sin(t),\ Q=w+r'\sin(t)+r\tau\cos(t).$$
Then
$$N_3=\frac{(\cos(t)P+\sin(t)Q)t_3-\cos(t)M n_3-\sin(t)M b_3}{\sqrt{M^2+(\cos(t)P+\sin(t)Q)^2}},$$
where $t_3,n_3$ and $b_3$ are the third coordinates of $\t,\n$ and $\b$ with respect to the canonical basis of $\r^3$.
For the computation of the mean curvature $H$,  we have to differentiate twice the parametrization $X(s,t)$. In all these computations, we use the Frenet equation and the fact that $\{\t,\n,\b\}$ is an orthonormal basis of $\r^3$, with $\mbox{det}(\t(s),\n(s),\b(s))=1$ for all $s$. After a straightforward computation, the condition $2H-N_3=0$ is expressed as a trigonometric polynomial on $\{\cos(nt),\sin(nt): n\in{\mathbb Z}\}$, namely,
$$\sum_{n=0}^4\left(A_n(s)\cos(nt)+B_n(s)\sin(nt)\right)=0,$$
where $A_n$ and $B_n$ are smooth functions. Thus all coefficients $A_n$ and $N_n$ vanish in the $s$-interval.

The leader coefficients (for $n=4)$ are
$$A_4=\frac18\kappa r^2\left(n_3(\kappa^2r^2+v^2)+2b_3vw+n_3w^2\right)$$
$$B_4=-\frac18\kappa r^2\left(2n_3vw-b_3(\kappa^2+r^2+v^2-w^2)\right).$$
The linear combination $b_3 A_4-n_3 B_4=0$ simplifies into
\begin{equation}\label{b3}
(b_3^2+n_3^2)vw=0.
\end{equation}
Let us observe that if $b_3=n_3=0$, then the vectors $\n(s)$ and $\b(s)$ are orthogonal to $E_3$ for all $s$. In such a case, $\t(s)=\pm E_3$ for all $s$ and it follows that $\Gamma'(s)$ would be parallel to the vector $E_3$, in particular, $\Gamma$ is a (vertical) straight-line, a contradiction. Thus $n_3$ and $b_3$ can not vanish mutually. We discuss Eq. \eqref{b3} case-by-case.
\begin{enumerate}
\item Case $n_3b_3\not=0$. Then $v=0$ or $w=0$.
\begin{enumerate}
\item Sub-case $v=0$. Then $A_4=0$ gives $w^2=\kappa r^2$ and it follows $w=\pm\kappa r$. Suppose $w=\kappa r$ (similar if $w=-\kappa r)$. Then
    $$A_3=\frac12\kappa^2r^3(n_3u+b_3r'),\ \ B_3=\frac12\kappa^2r^3(b_3u-n_3r').$$
This implies $u=0$ and $r'=0$. Finally for $n=2$, we have
$$A_2=-\frac12r^4\kappa^3n_3,\ \ B_2=-\frac12 r^4 \kappa^3 b_3,$$
obtaining $n_3=b_3=0$, a contradiction.
\item Sub-case $w=0$. Now
$$A_4=-\frac18 \kappa r^2 n_3(\kappa^2 r^2+v^2),$$
and $A_4=0$ implies $n_3=0$, a contradiction.
\end{enumerate}
\item  Case $n_3=0$ and $b_3\not=0$. Then $A_4=0$ reduces into $b_3vw=0$. Thus $v=0$ or $w=0$. Suppose $v=0$ (similar if $w=0$). Then $w=\pm\kappa r$. Now $A_3=0$ and $B_3=0$ write as
     $b_3r'=0$ and $b_3u=0$, respectively. Since $b_3\not=0$, it follows $r'=0$ and $u=0$. Finally, the coefficient $B_2$ is $B_2=-r^4b_3\kappa^3/2$, and $B_2=0$ yields the desired contradiction.
\item Case $b_3=0$ and $n_3\not=0$. This   is similar to the  previous case.
\end{enumerate}

\section{Proof of Theorem \ref{t3}}

Let $S\subset\r^3$ be a translation surface $z=f(x)+g(y)$, where $f:I\subset\r$ and $g:J\subset\r\rightarrow\r$ are smooth functions defined in intervals of $\r$. Let $v=(\alpha,\beta,\gamma)\in\r^3$ be a non-zero vector. Suppose that $S$ is a $\phi$-minimal surface with $\vec{v}$ as $\phi$-vector density.  As it was pointed out in Remark \ref{rem}, we can not do a change of coordinates to prescribe the $\phi$-vector density $\vec{v}$ because in such a case, the surface is not of the form $z=f(x)+g(y)$. Recall that in \cite{mh}, it has been considered the case that $\alpha=\beta=0$, proving that the only possibility is that $f$ (or $g$) is linear.

Let $X(x,y)=(x,y,f(x)+g(y))$ be the parametrization of $S$. With respect to the unit normal vector field
$$N=\frac{X_x\times X_y}{|X_x\times X_y|}=\frac{(-f',-g',1)}{\sqrt{1+f'^2+g'^2}}$$
the mean curvature $H$ is
$$H=\frac{(1+g'^2)f''+(1+f'^2)g''}{2(1+f'^2+g'^2)^{3/2}}.$$
Thus $S$ is a $\phi$-minimal surface if and only if
$$\frac{(1+g'^2)f''+(1+f'^2)g''}{(1+f'^2+g'^2)^{3/2}}=\frac{-\alpha f'-\beta g'+\gamma}{\sqrt{1+f'^2+g'^2}},$$
or equivalently,
\begin{equation}\label{eqt1}
(1+g'^2)f''+(1+f'^2)g''=(1+f'^2+g'^2)(-\alpha f'-\beta g'+\gamma).
\end{equation}
We differentiate \eqref{eqt1} with respect to $x$ and next with respect to $y$, obtaining
\begin{equation}\label{eqt2}
g'g''f'''+f'f''g'''=-f''g''(\alpha g'+\beta f').
\end{equation}
For completeness, we consider the case $\alpha=\beta=0$, doing a different proof than  in \cite{mh}. In such a situation, $g'g''f'''+f'f''g'''=0$. If $f''=0$ (resp. $g''=0$), then $f$ (resp. $g$) is linear. Assume $f''g''\not=0$, in particular, $f'g'\not=0$. Then there exists $\lambda\in\r$ such that
$$\frac{f'''}{f'f''}=-\frac{g'''}{g'g''}=2\lambda.$$
A direct integration yields $f''=a_1+\lambda f'^2$ and $g''=b_1-\lambda g'^2$, $a_1,b_1\in\r$. Then \eqref{eqt1} simplifies now into
$$(a_1+b_1-\gamma)+(b_1-\gamma+\lambda)f'^2+(a_1-\gamma-\lambda)g'^2=0.$$
If we view this expression first as a polynomial equation on $f'$ and second as a polynomial equation on $g'$, we deduce $a_1+b_1-\gamma=0$, $a_1-\gamma-\lambda=0$ and $a_1+b_1-\gamma=0$, obtaining $\gamma=0$,  a contradiction.

Once proved the result for $\alpha=\beta=0$, we discuss the rest of cases. First, we assume $f''g''\not=0$ and we will arrive to a contradiction. Since the case $\alpha\not=0$ and $\beta=0$ is similar to $\alpha=0$ and $\beta\not=0$, there are only two possibilities.

\subsection{Case $\alpha=0$,  $\beta\not=0$ }
Dividing \eqref{eqt2} by $f'f''g'g''$, we have
$$\frac{f'''}{f'f''}=-\frac{\beta}{g'}-\frac{g'''}{g'g''}.$$
Because the left hand side depends on $x$ and the right hand side depends on $y$, there  exists $\lambda\in\r$ such that
$$\frac{f'''}{f'f''}=-\frac{\beta}{g'}-\frac{g'''}{g'g''}=2\lambda.$$
Then $f''=\lambda f'^2+a_1$, $a_1\in\r$.  If we view  \eqref{eqt1} as a polynomial on $f'$ and since $f'\not=0$, then all coefficients vanish, obtaining the next two equations:
$$\lambda(1+g'^2)+g''+\beta g'-\gamma=0,\ \ (1+g'^2)(a_1+\beta g'-\gamma)=0.$$
In particular, $a_1+\beta g'-\gamma=0$. As $\beta\not=0$, we conclude  $g''=0$, a contradiction.

\subsection{Case $\alpha,\beta\not=0$}
 From \eqref{eqt2}, we have
$$\frac{f'''}{f'f''}+\frac{\alpha}{f'}=-\frac{\beta}{g'}-\frac{g'''}{g'g''}.$$
Again there exists a constant $\lambda\in\r$ such that
$$\frac{f'''}{f'f''}+\frac{\alpha}{f'}=-\frac{\beta}{g'}-\frac{g'''}{g'g''}=2\lambda.$$
Then a first integration implies
$$f''=-\alpha f'+\lambda f'^2+a_1,\ a_1\in\r.$$
Substituting into \eqref{eqt1}, we get a polynomial on $f'(x)$ of type
$$\sum_{n=0}^3 B_n(y) f'(x)^n=0.$$
Because $f'\not=0$ identically, then the coefficients $B_n$ must vanish. However a straightforward computation of $B_3$ leads to $B_3=\alpha$, obtaining a contradiction.

Once that the case $f''g''\not=0$ has been discarded, then we conclude $f$ or $g$ is linear. This finishes the proof of Th. \ref{t3}.

In order to summarize the results, and by the symmetry of the roles of $f$ and $g$, we will suppose that $f''=0$.

\begin{theorem} Let $S\subset\r^3$ be a translation surface $z=f(x)+g(y)$. Suppose $S$ is a $\phi$-minimal surface for a log-linear density $\phi(x,y,z)=\alpha x+\beta y+\gamma z$. Then:
\begin{enumerate}
\item There exists $a_1,a_0\in\r$ such that $f(x)=a_1x+a_0$.
\item The surface is cylindrical and the rulings are parallel to the $xz$-plane.
\item The function $g$ satisfies
\begin{equation}\label{g}
(1+a_1^2)g''=(1+a_1^2+g'^2)(-\beta g'-\alpha a_1+\gamma).
\end{equation}
\end{enumerate}
\end{theorem}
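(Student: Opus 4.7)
The plan is to apply Theorem \ref{t3} directly: it has already been established that if $S$ is a $\phi$-minimal translation surface $z = f(x) + g(y)$, then at least one of $f$, $g$ must be a linear (affine) function. By the symmetry that interchanges the roles of $(x, f, \alpha)$ and $(y, g, \beta)$, I will assume without loss of generality that $f'' \equiv 0$, as announced in the text preceding the statement. The three claims (1), (2), (3) should then follow by elementary manipulations of the parametrization of $S$ together with Eq.~\eqref{eqt1}.

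First I would integrate $f'' \equiv 0$ twice to obtain $f(x) = a_1 x + a_0$ for constants $a_1, a_0 \in \r$, which is precisely statement (1). For (2), the resulting parametrization
\[
X(x,y) = (x,\, y,\, a_1 x + a_0 + g(y))
\]
has constant tangent vector $X_x = (1, 0, a_1)$, so $S$ is the union of the translates of the planar curve $y \mapsto (0, y, a_0 + g(y))$ along the fixed direction $(1, 0, a_1)$. Since this ruling direction has vanishing $y$-component, it lies in the $xz$-plane, and $S$ is cylindrical with rulings parallel to the $xz$-plane, as claimed.

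For (3), I would substitute $f'(x) = a_1$ and $f''(x) = 0$ into the $\phi$-minimality equation \eqref{eqt1}: the term $(1+g'^2) f''$ vanishes while the coefficient $1 + f'^2$ collapses to the constant $1 + a_1^2$, yielding
\[
(1 + a_1^2)\, g'' = (1 + a_1^2 + g'^2)\bigl(-\alpha a_1 - \beta g' + \gamma\bigr),
\]
which is exactly Eq.~\eqref{g}. There is essentially no analytic obstacle at this stage, since the hard work — proving that $f'' g'' \equiv 0$ cannot hold under the $\phi$-minimality equation — was already carried out in the case analysis leading to Theorem \ref{t3}; the present statement is a structural summary. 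The only point requiring a modest amount of care is the geometric identification in (2), namely verifying that the ruling direction $(1, 0, a_1)$ is parallel to the $xz$-plane (not to the $yz$-plane), which would be interchanged had the symmetric choice $g'' = 0$ been made instead.
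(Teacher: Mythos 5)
Your proposal is correct and follows exactly the paper's route: the statement is a structural summary of Theorem \ref{t3}, obtained by taking $f''=0$ without loss of generality, integrating to get $f(x)=a_1x+a_0$, observing the constant ruling direction $(1,0,a_1)$, and substituting $f'=a_1$, $f''=0$ into Eq.~\eqref{eqt1} to recover Eq.~\eqref{g}. No issues.
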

We remark that \eqref{g} is an ODE and thus the existence is assured at least locally, proving the existence of $\phi$-minimal surfaces of translation type.

To finish this section, we give some particular solutions of \eqref{g}. Let   $f(x)=a_1x+a_0$.
\begin{enumerate}
\item Consider $a_1=0$, $\gamma=0$ and $g(y)=b_2$, $b_2\in\r$. The surface is the horizontal plane of equation $z=a_2+b_2$.
    \item  Let  $g(y)=b_1 y+b_0$, $b_1,b_0\in\r$ and  $\gamma=\alpha a_1+\beta b_1$. The surface is a plane again.
\item If $\alpha=\beta=0$, we have (see\cite{mh}):
$$g(y)=b_2-\frac{1+a_1^2}{\gamma}\log\left[\cos\left(\frac{\gamma y+b_1}{\sqrt{1+a_1^2}}\right)\right],\ \ b_1,b_2\in\r.$$
\item  If $\gamma-\alpha a_1=0$, then
$$g(y)=b_1-\frac{\sqrt{1+a_1^2}}{\beta}\mbox{ arc}\sin\left(e^{b_2-\beta y}\right),\ \ b_1,b_2\in\r.$$

\end{enumerate}

\begin{remark} We point out that if $f(x)=a_0$ and $\alpha=\beta=0$, then we have the one-dimensional version of \eqref{eqdiv}, which is expressed as
$$\frac{y''}{1+y'^2}=\gamma,$$
where $y=y(x)$. This equation says that the curvature of the planar curve $y(x)$  is proportional to the $y$-coordinate of its unit normal vector.
\end{remark}

\end{document}